 \newtheorem{thm}{Theorem}[section]
 \newtheorem{lem}[thm]{Lemma}
 \newtheorem{prop}[thm]{Proposition}
 \theoremstyle{definition}
 \newtheorem{defn}[thm]{Definition}
 \theoremstyle{remark}
 \newtheorem{rem}[thm]{Remark}
 \numberwithin{equation}{section}
\def\r{\mathbb R}
\def\h{\mathbb H}
\begin{document}

%
%
%
%
%
%
%
%
%

\title{A new family of translating solitons in hyperbolic space}

\author{Antonio Bueno}

\address{Departamento de Ciencias\\  Centro Universitario de la Defensa de San Javier. 30729 Santiago de la Ribera, Spain}

\email{antonio.bueno@cud.upct.es}

\author{Rafael L\'opez}
\address{Departamento de Geometr\'{\i}a y Topolog\'{\i}a\\  Universidad de Granada. 18071 Granada, Spain}
\email{rcamino@ugr.es}
\subjclass{Primary 53A10; Secondary 53C44, 53C21, 53C42.}

\keywords{hyperbolic space, mean curvature flow, grim reaper, parabolic translation}

\date{}
 ------------------------------------------------------------------

\begin{abstract}
If   $\xi$ is a Killing vector field of the hyperbolic space $\h^3$ whose flow are parabolic isometries, a surface $\Sigma\subset\h^3$ is a $\xi$-translator  if its mean curvature $H$ satisfies $H=\langle N,\xi\rangle$, where $N$ is the unit normal of $\Sigma$. We classify all $\xi$-translators invariant by a one-parameter group of rotations of $\h^3$,  exhibiting the existence of a new family of grim reapers. We use these grim reapers   to prove the non-existence of closed $\xi$-translators.
\end{abstract}

\maketitle
\section{Introduction}
 
Let  $\Sigma$ be an orientable smooth surface and $\Psi:\Sigma\to\h^3$ an isometric immersion   in hyperbolic space  $\h^3$. The mean curvature flow (MCF in short)  is a differentiable map ${\bf \Psi}:\Sigma\times [0,T)\to\h^3$ such that if ${\bf \Psi}_t={\bf \Psi}(-,t)$, then ${\bf \Psi}_0=\Psi$  and $\frac{\partial{\bf \Psi}_t}{\partial t}  =H({\bf \Psi}_t)N({\bf \Psi}_t)$,  where $H({\bf \Psi}_t)$ and $N({\bf \Psi}_t)$ are the mean curvature and the unit normal of ${\bf \Psi}_t$ respectively. See \cite{an,cm}. Our interest are  those surfaces whose shape evolves along the MCF by translations along a direction of $\h^3$. These surfaces are called translating solitons of the MCF, or translators for short. In Euclidean space $\r^3$, translators  $\Sigma$ along a direction $\textbf{v}\in\r^3$ are characterized by  $H=\langle N,\textbf{v}\rangle$, where $H$ and $N$ are the mean curvature and the unit normal of $\Sigma$, respectively. Translators also  appear  in the singularity theory of the MCF after a blow-up near type II singularities, according to Huisken and Sinestrari \cite{hs}.

In the hyperbolic space, the same notion of translator can be stated by taking the translations of $\h^3$ determined from a Killing vector field $X\in\mathcal{X}(\h^3)$.   Geometrically, the shape of any such translator does not change during the evolution by motion along the one-parameter group of translations generated by $X$.

\begin{defn} Let $X\in\mathcal{X}(\h^3)$ be a Killing vector field whose flow are translations of $\h^3$. An immersed surface $\Sigma$ in $\h^3$ is an $X$-translator if its mean curvature $H$ satisfies 
\begin{equation}\label{eq1}
H=\langle N,X\rangle.
\end{equation}
\end{defn} 

In hyperbolic space there are parabolic translations and hyperbolic translations. Translators of the MCF whose shape evolves by hyperbolic translations have been recently studied in \cite{li}. In this paper, we investigate $\xi$-translators, where $\xi$ is a Killing vector field that generates parabolic translations. We point out that these translators in the MCF theory of $\h^3$ have not been previously studied in the literature. 

Consider the upper half-space model of $\h^3$, that is $(\r^3_{+},\langle,\rangle)$, where $\r_+^3=\{(x,y,z)\in\r^3:z>0\}$ and $\langle,\rangle$ is  the hyperbolic metric 
$$\langle,\rangle =\frac{1}{z^2}\langle,\rangle_e,$$ 
where $\langle,\rangle_e=dx^2+dy^2+dz^2$ is the Euclidean metric of $\r^3$. The ideal boundary $\partial_\infty\h^3$ is the one-compactification of the plane of equation $z=0$. Define the Killing vector field
\begin{equation}\label{x2}
\xi=a\partial_x+b\partial_y, \qquad a,b\in\r.
\end{equation}
The flow of isometries generated by $\xi$ are parabolic translations of $\h^3$, which in the upper half-space model correspond to horizontal Euclidean translations in the direction of  $(a,b,0)$. 

In this paper we classify all $\xi$-translators invariant by a one-parameter group of rotations. In hyperbolic space $\h^3$ there are three types of group of rotations. Following  Do Carmo and   Dajczer  \cite{dd}, there are parabolic rotations (a double point of $\h^3_\infty$ is fixed), hyperbolic rotations (two points of $\partial_\infty\h^3$ are fixed) and spherical rotations (a geodesic is pointwise fixed).  Consequently, there are three types of rotational surfaces called, respectively, parabolic, hyperbolic  and spherical rotational surfaces. In the literature,   parabolic and hyperbolic rotations are also called   parabolic and hyperbolic translations.

The first result gives a classification for hyperbolic and spherical rotational $\xi$-translators.

\begin{thm}\label{t1} 
\begin{enumerate}
\item Let $\Sigma$ be a hyperbolic rotational $\xi$-translator. Then, $\Sigma$ is a totally geodesic plane containing the two points at $\partial_\infty\h^3$ fixed by the hyperbolic rotations.
\item There are no spherical rotational $\xi$-translators.
\end{enumerate}
\end{thm}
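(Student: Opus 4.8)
The plan is to handle each rotation type in an adapted model of $\h^3$, reduce the $\xi$-translator equation to an ODE for a profile curve, and—crucially—exploit that \eqref{eq1} is covariant under isometries. Concretely, if $\{\phi_t\}$ is the one-parameter rotation group leaving $\Sigma$ invariant and $p\in\Sigma$, then applying \eqref{eq1} at $\phi_t(p)$ and using that $\phi_t$ is an isometry preserving $\Sigma$ (so $H(\phi_t(p))=H(p)$ and $N(\phi_t(p))=(\phi_t)_*N(p)$, with consistent orientation) gives
\begin{equation*}
H(p)=H(\phi_t(p))=\langle N(\phi_t(p)),\xi\rangle=\langle N(p),(\phi_{-t})_*\xi\rangle .
\end{equation*}
Thus $t\mapsto\langle N(p),(\phi_{-t})_*\xi\rangle$ is constant for every $p$. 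Since $\xi$ is parabolic while $\phi_t$ is not, $(\phi_{-t})_*\xi$ genuinely varies with $t$, and this single identity already forces strong pointwise restrictions on $N$.

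For part (1) I would place the axis of the hyperbolic rotations along the $z$-axis, following \cite{dd}, so that $\{\phi_t\}$ are the Euclidean dilations $\phi_t(p)=e^{t}p$ and the fixed points at infinity are $0$ and $\infty$; after a rotation about the $z$-axis (which commutes with the dilations) one may assume $\xi=a\,\partial_x$. Here $(\phi_{-t})_*\xi=e^{-t}\xi$, so the displayed identity reads $H(p)=e^{-t}\langle N(p),\xi\rangle$ for all $t$, forcing $\langle N,\xi\rangle\equiv 0$ and hence $H\equiv 0$. Therefore $\Sigma$ is minimal and $\xi$ is everywhere tangent, so $\Sigma$ is invariant under both the dilations and the $x$-translations generated by $\xi$; a connected surface invariant under these two transverse flows is a Euclidean half-plane through the origin, $\mathbb{R}_x\times\{\lambda(0,y_0,z_0):\lambda>0\}$. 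A short conformal-factor computation shows that among these half-planes only the vertical one, $\{y=0\}$, has $H\equiv 0$, and it is totally geodesic and contains $0$ and $\infty$, as claimed.

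For part (2) I would put the axis of the spherical rotations on the $z$-axis, so that $\{\phi_t\}$ are the Euclidean rotations about that axis and $\xi=a\,\partial_x+b\,\partial_y$ is horizontal. Now $(\phi_{-t})_*\xi$ runs through all horizontal directions of fixed length, so the constancy of $t\mapsto\langle N(p),(\phi_{-t})_*\xi\rangle$ means $\langle N(p),\,(\phi_{-t})_*\xi-\xi\rangle=0$ for all $t$; as these differences span the whole horizontal plane, $N(p)$ must be vertical at every point. A rotational surface with everywhere-vertical normal is a horizontal plane $\{z=z_0\}$, i.e.\ a horosphere with $|H|=1$; but there $\langle N,\xi\rangle=0$, contradicting \eqref{eq1}. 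Hence no spherical rotational $\xi$-translator exists.

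The main obstacle I anticipate is the relative position of the point of $\partial_\infty\h^3$ fixed by $\xi$ and the fixed-point set of the rotation group: the clean computations above correspond to configurations adapted to the $z$-axis, and since only the horizontal translations (and reflections) preserve $\xi$ exactly, one cannot rotate an arbitrary axis into this position without altering $\xi$. I therefore expect to carry out the profile-curve ODE explicitly and split into sub-cases according to the angle between $\xi$ and the axis, verifying that the only surviving solutions are the totally geodesic planes through the two fixed points in the hyperbolic case, and that every configuration is obstructed in the spherical case. Reconciling all sub-cases into the single clean statement of Theorem~\ref{t1} is the delicate bookkeeping.
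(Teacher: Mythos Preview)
Your argument is correct and takes a genuinely different route from the paper. The paper parametrizes each rotational surface explicitly (as a radial graph $\Psi(s,t)=t\alpha(s)$ in the hyperbolic case, and as $\Psi(s,t)=(x(s)\cos t,x(s)\sin t,z(s))$ in the spherical case), computes $H$ and $N$ via the formula $H=zH_e+(N^e)_3$, and then observes that the $\xi$-translator equation contains an isolated factor of $1/t$ (hyperbolic) or of $a\cos t+b\sin t$ (spherical); linear independence in $t$ forces $H\equiv 0$ together with the extra constraint, and the identification of the surface follows. Your covariance identity $H(p)=\langle N(p),(\phi_{-t})_*\xi\rangle$ does the same job without any parametrization or curvature computation: the scaling $(\phi_{-t})_*\xi=e^{-t}\xi$ (hyperbolic) and the full horizontal rotation of $\xi$ (spherical) immediately yield $H\equiv 0$ and the tangency/verticality of $N$, after which both arguments finish identically by singling out the vertical plane (resp.\ horosphere) and checking its mean curvature. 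Your approach is cleaner and more conceptual; the paper's is more hands-on but requires no abstract pushforward bookkeeping. One remark: the ``main obstacle'' you anticipate---the relative position of the rotation axis and the fixed point $\infty$ of $\xi$---is simply not addressed in the paper, which writes ``after an isometry of $\h^3$'' and places the axis along the $z$-axis without discussing what this does to $\xi$. So your planned case-split would in fact go \emph{beyond} what the paper proves; for matching the paper's proof you may take the axis vertical from the outset.
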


More interesting is the family of parabolic $\xi$-translators.  After an isometry of $\h^3$, and from the Euclidean viewpoint, a parabolic rotational surface is a cylindrical surface of $\r^3_{+}$ whose rulings are horizontal lines. Such a surface can be parametrized as $\Psi(s,t)=(x(s),t,z(s))$, where $(x(s),0,z(s))$ is the generating curve. By analogy with the Euclidean case, we give the following definition.

\begin{defn}
A $\xi$-grim reaper is a parabolic $\xi$-translator.
\end{defn}

The main result in this paper is the following classification of the $\xi$-grim reapers.

\begin{thm}\label{t2}
Let be $\xi=a\partial_x+b\partial_y,\ a,b\in\r$, and $\Sigma$ a $\xi$-grim reaper.
\begin{itemize}
\item If $a=0$, then $H=0$ and $\Sigma$ is either a vertical plane parallel to $\xi$, or belongs to a one-parameter family $\mathcal{G}^0(z_0)$. The generating curve of each $\mathcal{G}^0(z_0)$ is a strictly concave graph on $x$-axis, intersecting orthogonally the $x$-axis at two points, and whose maximum height to the $x$-axis is $z_0$.
\item If $a\neq0$, then $\Sigma$ belongs to a one-parameter family $\mathcal{G}(z_0)$. The generating curve of each $\mathcal{G}(z_0)$ is a bi-graph on the $x$-axis, its maximum height to the $x$-axis is $z_0$, and both graphical components converge to the $x$-axis as $x\rightarrow\infty$.
\end{itemize}
\end{thm}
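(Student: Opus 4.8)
The plan is to reduce equation \eqref{eq1} to an ODE for the generating curve and then analyze its solutions by a phase--plane study, treating the cases $a=0$ and $a\neq0$ separately. First I would parametrize the surface by $\Psi(s,t)=(x(s),t,z(s))$, taking the generating curve $\gamma(s)=(x(s),z(s))$ to be parametrized by Euclidean arc length, $x'^2+z'^2=1$. A direct computation gives the induced metric coefficients $E=G=1/z^2$, $F=0$, and the hyperbolic unit normal $N=z(z',0,-x')$. Since $\xi=a\partial_x+b\partial_y$ and $N$ has no $\partial_y$--component (the rulings $\partial_y$ are tangent to $\Sigma$), one finds
\[
\langle N,\xi\rangle=\frac{a\,z'}{z},
\]
so that \emph{only the coefficient $a$ enters}; this is the structural reason for the dichotomy in the statement. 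Computing the shape operator (its off--diagonal entry vanishes because $F=0$) yields the principal curvatures $\kappa_1=z(z'x''-x'z'')-x'$ along $\gamma$ and $\kappa_2=-x'$ along the rulings. Writing $x'=\cos\theta,\ z'=\sin\theta$ for the tangent angle $\theta$ one has $z'x''-x'z''=-\theta'$, and \eqref{eq1} becomes, up to the normalization of $H$, the first--order system
\[
x'=\cos\theta,\qquad z'=\sin\theta,\qquad z\,\theta'=-2\cos\theta-\frac{c\,a\sin\theta}{z},
\]
with $c>0$ constant. This is the ODE I would analyze.

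For $a=0$ the last equation is $z\theta'=-2\cos\theta$, i.e. $H\equiv0$, so the $\xi$--grim reapers are exactly the minimal parabolic rotational surfaces. I would integrate directly: from $dz/d\theta=(\sin\theta)/\theta'$ one gets $z^2=z_0^2\cos\theta$. The constant solutions $\theta\equiv\pm\pi/2$ correspond to the vertical planes $x=\mathrm{const}$ (parallel to $\xi$), while for each $z_0>0$ the curve $z^2=z_0^2\cos\theta$, $\theta\in(-\pi/2,\pi/2)$, is the generating curve of the family $\mathcal G^0(z_0)$: it is a graph with maximal height $z_0$ attained at $\theta=0$, it is strictly concave (since $\theta$ is strictly monotone), and it meets $z=0$ precisely at $\theta=\pm\pi/2$, where the tangent is vertical, i.e. orthogonally to the $x$--axis at two points. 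This settles the first bullet.

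For $a\neq0$ (say $a>0$, and by the above $b$ is irrelevant) the analysis is genuinely that of a planar autonomous system in $(z,\theta)$. By the Picard theorem each initial condition gives a unique maximal solution; normalizing the unique apex, where $z'=0$ and $z$ is maximal, to occur at $\theta=0,\ z=z_0,\ x=0$ produces the one--parameter family $\mathcal G(z_0)$. I would then track the trajectory in both time directions. For $s>0$ one shows $\theta$ stays in $(-\pi/2,0)$ --- it first decreases and then, as the term $-c\,a\sin\theta/z^2$ dominates for small $z$, increases back to $0^-$ --- so $\cos\theta>0$ throughout, $x$ is strictly increasing, and $(x,z)\to(+\infty,0)$: this is the first graphical branch. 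For $s<0$ one shows $\theta$ increases through $\pi/2$, where the curve has a vertical tangent and a leftmost point $x_{\min}$, and then tends to $\pi$ as $z\to0$, giving a second graphical branch that again satisfies $(x,z)\to(+\infty,0)$. Gluing the two branches at the vertical tangent exhibits the generating curve as a bi-graph over $[x_{\min},\infty)$ with maximal height $z_0$, both components decaying to the $x$--axis as $x\to\infty$, which is the second bullet.

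The main obstacle is precisely this last, global, qualitative step for $a\neq0$: showing that the trajectory behaves as described for \emph{every} $z_0$ and does not degenerate. Concretely, I expect the delicate points to be (i) ruling out that $z$ vanishes at finite arc length with a non-vertical tangent, which would destroy the asymptotics; (ii) proving the monotonicity and single turning of $\theta$ rather than oscillation; and (iii) establishing the exact decay $z\to0$ as $x\to+\infty$, where a dominant--balance analysis of the graph form $u''=-(1+u'^2)(2u+c\,a\,u')/u^2$ forces $u'\approx-2u/(c\,a)$ and hence suggests exponential decay of the height. I would handle these by comparison arguments for the $\theta$--equation together with the monotone quantities $z$ and, on each branch, $x$, and by a careful expansion near $z=0$; completeness of the generating curve then follows. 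The explicit solvability of the $a=0$ case provides a useful barrier for these comparisons.
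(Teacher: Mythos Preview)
Your proposal is correct and follows essentially the same route as the paper: derive the ODE $z\theta'/2+\cos\theta=-a\sin\theta/z$ for the arc-length generating curve, integrate explicitly when $a=0$ to recover the minimal parabolic rotational surfaces, and for $a\neq0$ perform a phase-plane analysis in $(z,\theta)$ showing that every orbit through $(z_0,0)$ tends to $(0,0)$ forward and to $(0,\pi)$ backward, yielding the bi-graph picture. The only notable technical difference is in your delicate point (i): rather than an asymptotic expansion near $z=0$, the paper rules out $\theta_*\notin\{0,\pi\}$ by integrating the graph equation $u''/(1+u'^2)=-2/u-2u'/u^2$ over a short interval, applying the mean value theorem for integrals, and deriving a contradiction from the divergence of $2/u$ --- a device that cleanly handles all the boundary cases at once.
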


This paper is organized as follows. In Sec. \ref{sec2} we prove Th. \ref{t1}, while in Sec. \ref{sec3} we prove Th. \ref{t2}. It is of special relevance the proof of the case $a\neq0$, for which we study the qualitative properties of the generating curve of a $\xi$-grim reaper by means of a phase plane analysis. In Sec. \ref{sec4}, and under mild hypothesis, we prove the non-existence of  $\xi$-translators contained in the convex side of Killing cylinders and consequently, there are no closed $\xi$-translators.  Due to the variety of translators and grim reapers that can be defined in $\h^3$, an Appendix has been added to summarize the literature concerning it.

\section{Proof of Theorem \ref{t1}}\label{sec2}

In the upper half-space model of $\h^3$ there is a relation between the mean curvature $H$ of a surface $\Sigma$ and its Euclidean mean curvature $H_e$, when $\Sigma$ is regarded as surface isometrically immersed in $\r^3_{+}$. This relation is
\begin{equation}\label{mean}
H(x,y,z)=zH_e(x,y,z)+(N^e)_3(x,y,z),\quad (x,y,z)\in\Sigma,
\end{equation}
where $N^e$ is the   Euclidean unit normal of $\Sigma$ and the subindex $(\cdot)_3$ denotes the third coordinate of the vector. To prove Th. \ref{t1} we distinguish between hyperbolic and spherical rotational surfaces.

First, assume that $\Sigma$ is a hyperbolic rotational surface. From the Euclidean viewpoint, $\Sigma$ is invariant under the homotheties from a point of the plane $z=0$. After an isometry of $\h^3$, this point can be fixed to be the origin $O$ of $\r^3_{+}$. Then,   $\Sigma$ can be parametrized as the radial graph of a curve $\alpha(s)=(x(s),y(s),1)$, that is, 
$$\Psi(s,t)=t\alpha(s)=t(x(s),y(s),1),\quad s\in I\subset\r,\ t\in\r.$$
Suppose that $s$  is the Euclidean arc-length parameter.   The Euclidean mean curvature $H_e$ and unit normal $N^e$ are
$$H_e=\frac{| \alpha|_e^2\langle \alpha'\times\alpha,\alpha''\rangle_e}{2t(| \alpha|_e^2-\langle\alpha',\alpha\rangle_e^2)^{3/2}}, \quad N^e=\frac{\alpha'\times\alpha}{(| \alpha|_e^2-\langle\alpha',\alpha\rangle_e^2)^{1/2}}.$$
Since $N=zN^e$, then $\langle N,\xi\rangle=\frac{1}{t}\langle N^e,\xi\rangle_e$. Thus \eqref{eq1} is 
\begin{equation}\label{aa}
\frac{| \alpha|_e^2\langle \alpha'\times\alpha,\alpha''\rangle_e}{2(| \alpha|_e^2-\langle\alpha',\alpha\rangle_e^2)}+ (\alpha'\times\alpha)_3=\frac{\langle\alpha'\times \alpha, \xi\rangle_e}{t}.
\end{equation}
As this equation holds for any $t\in\r$, we deduce that the left hand-side of \eqref{aa} vanishes identically, that is, $H=0$. The same occurs for the right hand-side of \eqref{aa}, so  $ \langle\alpha'\times \alpha,\xi\rangle_e=0$, which implies that $\alpha$  is a horizontal line parallel to the vector $(a,b,0)$.  This proves that $\Sigma$ is a plane of $\r^3$ through $O$. In hyperbolic geometry, $\Sigma$ is a totally geodesic vertical plane if the curve $s\mapsto (x(s),y(s),0)$ passes through $O$,  or $\Sigma$ is an equidistant plane in other case. Because the mean curvature of an equidistant plane is not zero, the result holds. 

Now, we assume that $\Sigma$ is a spherical rotational surface, which up to an isometry can be then parametrized by 
 $$\Psi(s,t)=(x(s)\cos t,x(s)\sin t,z(s)),\quad s\in I\subset\r,\ t\in\r,$$
 where $\alpha(s)= (x(s),0,z(s))$ is the generating curve of $\Sigma$. We assume that $s$ is  the Euclidean arc-length parameter, hence $\alpha'(s)=( \cos\theta(s),0,\sin\theta(s))$ for some smooth function $\theta=\theta(s)$. A straightforward computation shows that Eq. \eqref{eq1} writes as
 $$\frac{z}{2}\left(\theta'+\frac{z'}{x}\right)+x'=-\frac{z'}{z}(a\cos t+b\sin t).$$
Since this equation holds for any $t$,   the functions $\{1,\cos t,\sin t\}$ are linearly independent and $a,b$ are not both identically zero, we deduce that the left hand-side is zero,  that is, $H=0$, and from the right hand-side, we have $z'(s)=0$ for all $s\in I$. This means that $\alpha$ is a horizontal Euclidean straight line, which yields that $\Sigma$ is a horosphere.  However, the mean curvature of a horosphere is $H=1$, obtaining a contradiction.

\section{Classification of the $\xi$-grim reapers}\label{sec3}
 
In the upper half-space model of $\h^3$, a parabolic rotational surface $\Sigma$ can be parametrized as a cylindrical surface of $\r^3_+$ whose rulings are horizontal lines. After a rotation about the $z$-axis, which only changes the constants $a$ and $b$ in \eqref{x2},   we can suppose that the direction of the rulings is $(0,1,0)$. Then, a parametrization for $\Sigma$ is
\begin{equation}\label{para}
\Psi(s,t)=(x(s),t,z(s)),\quad s\in I\subset\r, t\in\r,
\end{equation}
where  $\alpha(s)=(x(s),0,z(s))$ is   parametrized by the Euclidean arc-length. Then  $x'=\cos\theta$, $z'=\sin\theta$ for some smooth function $\theta=\theta(s)$. The Euclidean mean curvature and unit normal of $\Sigma$ are $H_e=\theta/2$ and $N^e=(-z',0,x')$. If $\Sigma$ is a $\xi$-grim reaper, according to \eqref{mean}, Eq.  \eqref{eq1} is 
\begin{equation}\label{eq2}
z\frac{\theta'}{2}+x'=-a\frac{z'}{z}.
\end{equation}
At this point, we distinguish if $a$ vanishes or not.

\subsection{Proof of Theorem \ref{t2}: case $a=0$}
In Th. \ref{t2} we assume  $a=0$, hence Eq. \eqref{eq2} implies  $H=0$.  Parabolic rotational surfaces of $\h^3$ with $H=0$   were classified in \cite{dd,go}. For completeness, we describe these surfaces. If $x'=0$ at some point, then the solution of  \eqref{eq2} is $\theta(s)=0$, $\alpha$ is a vertical line and $\Sigma$ is a totally geodesic plane parallel to $(0,1,0)$. Suppose now $x'(s)\not=0$ for all $s$. Then   $\alpha$ is a graph $z=z(x)$ on the $x$-axis,   and \eqref{eq2} is 
 \begin{equation}\label{eq-p1}
 \frac{z''}{1+z'^2}=-\frac{2}{z}.
 \end{equation}
 In particular, the curve $\alpha$ is a strictly concave graph. Multiplying \eqref{eq-p1}  by $z'$ and integrating, there is a positive constant $c>0$ such that $1+z'^2=cz^{-4}$.  Hence, it can be deduced that   $\alpha$   is  symmetric about a vertical line and that   $\alpha$   intersects orthogonally  the $x$-axis at two points. See Fig. \ref{fig1}. We denote this $\xi$-grim reaper by $\mathcal{G}^0(z_0)$ indicating the maximum height $z_0$ of $\alpha$ to the $x$-axis.

\begin{figure}[h]
\begin{center}
\includegraphics[width=.45\textwidth]{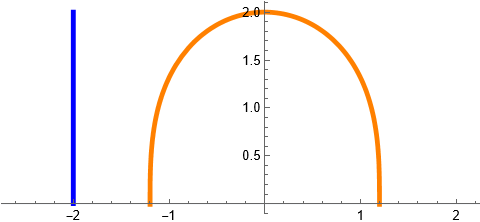}
\end{center}
\caption{The two types of generating curves of grim reapers when $\xi=\partial_y$.}\label{fig1}
\end{figure}

\begin{rem}\label{remark2} Eq. \eqref{eq-p1} appears in the context of singular minimal surfaces. More exactly, solutions of \eqref{eq-p1} are $-2$-catenaries following the terminology of \cite[Prop. 1]{lo} and their shapes are well known. See also \cite{dl}. 
\end{rem}

  \subsection{Proof of Theorem \ref{t2}: case  $a\neq0$}
  
Now, we suppose $a\not=0$ in \ref{t2}. After a reflection about a vertical plane and a hyperbolic translation we assume $a=1$, hence $\xi=\partial_x+b\partial_y$. From \eqref{eq2} the following system is fulfilled
\begin{equation}\label{eqs}
\left\{
\begin{split} 
x'&=\cos\theta  \\
z'&=\sin\theta \\
\theta'&=\displaystyle{-\frac{2}{z^2}\left(\sin\theta+z\cos\theta\right)}. 
\end{split}
\right.\end{equation}
Note that  the first equation of  \eqref{eqs} can be obtained from the second and third ones, and that the parameter $b$ does not appear. The phase plane of \eqref{eqs} is defined by $\Theta=\{(z,\theta)\colon z>0,\theta\in(-\pi,\pi)\}$, with coordinates denoting the height $z$ and the angle $\theta$. The  orbits $\gamma=(z,\theta)$ are the solutions of \eqref{eqs} when regarded in $\Theta$. By uniqueness, two distinct orbits cannot intersect, and the the Cauchy problem of \eqref{eqs} ensures the existence of an orbit passing through each point in $\Theta$. We point out that every orbit defines a generating curve $\alpha$ of a $\xi$-grim reaper and backwards. The following  result about  the phase plane  is straightforward, hence its proof is omitted.

\begin{prop}
The following properties of the phase plane hold.
\begin{enumerate}
\item If $\gamma(s)=(z(s),\theta(s))$ is an orbit, then $\overline{\gamma}(s)=(z(-s),\theta(-s)-\pi)$
is also an orbit. Consequently, we can restrict the coordinate $\theta$ to lie in $\theta\in(-\pi/2,\pi)$.
\item The curve $\Gamma:=\Theta\cap\{z=\Gamma(\theta)\}$, where $\Gamma(\theta)=-\tan\theta$, corresponds to points of $\alpha$ with vanishing Euclidean curvature. It only appears for $\theta\in(-\pi/2,0]\cup(\pi/2,\pi]$ and has the lines $\theta=0$ and $\theta=\pi/2$ as asymptotes. 
\item The curve $\Gamma$ and the line $\theta=0$ divide $\Theta$ into \emph{monotonicity regions} where each coordinate function of an orbit is strictly monotonous. See Fig. \ref{figfases}, left.
\end{enumerate}
\end{prop}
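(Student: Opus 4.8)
The plan is to verify each of the three assertions by direct inspection of the system \eqref{eqs}, since all of them reduce to an involution check and an elementary sign analysis.

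For the symmetry in item (1), I would substitute $\overline{\gamma}(s)=(\bar z(s),\bar\theta(s))$ with $\bar z(s)=z(-s)$ and $\bar\theta(s)=\theta(-s)-\pi$ into \eqref{eqs} and check that it solves the same system. The chain rule gives $\bar z'(s)=-z'(-s)=-\sin\theta(-s)$, and since $\sin(\theta(-s)-\pi)=-\sin\theta(-s)$ this equals $\sin\bar\theta(s)$, so the second equation holds. Likewise $\bar\theta'(s)=-\theta'(-s)$, while using $\sin\bar\theta=-\sin\theta(-s)$, $\cos\bar\theta=-\cos\theta(-s)$ and $\bar z=z(-s)$ the right-hand side of the third equation of \eqref{eqs} evaluated at $\overline\gamma$ equals $+\frac{2}{z(-s)^2}\left(\sin\theta(-s)+z(-s)\cos\theta(-s)\right)=-\theta'(-s)=\bar\theta'(s)$; hence $\overline{\gamma}$ is again an orbit. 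Geometrically this map is the arc-length reversal $s\mapsto-s$, which rotates the tangent angle by $\pi$ and leaves the generating curve $\alpha$ invariant. Since $\theta$ encodes the tangent direction $(\cos\theta,\sin\theta)$ and the involution $\theta\mapsto\theta-\pi$ (modulo $2\pi$) carries the band $\theta\in(-\pi,-\pi/2]$ onto $\theta\in(0,\pi/2]$, no orbit is lost by restricting to $\theta\in(-\pi/2,\pi)$.

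For item (2), I would recall that for the arc-length curve $\alpha$ with tangent angle $\theta$ the Euclidean curvature is exactly $\theta'$. Setting $\theta'=0$ in the third equation of \eqref{eqs} yields $\sin\theta+z\cos\theta=0$, and on $\cos\theta\neq0$ this is $z=-\tan\theta=:\Gamma(\theta)$. The constraint $z>0$ forces $\tan\theta<0$, which inside the restricted range $\theta\in(-\pi/2,\pi)$ occurs precisely on $(-\pi/2,0)\cup(\pi/2,\pi)$, matching the claimed set once one adjoins the boundary values at which $\Gamma$ meets $z=0$. Differentiating gives $\Gamma'(\theta)=-\sec^2\theta<0$, so each branch of $\Gamma$ is strictly decreasing, and evaluating the one-sided limits at the endpoints of each branch produces the stated asymptotic lines together with the points where $\Gamma$ touches the boundary $z=0$.

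Item (3) is then a sign count. The sign of $z'=\sin\theta$ changes only across the line $\theta=0$ (the zero at $\theta=\pi$ lying on the boundary of the strip), whereas the sign of $\theta'$ equals the sign of $-(\sin\theta+z\cos\theta)$ and hence changes only across $\Gamma$, the locus where this quantity vanishes. Consequently, on each connected component of the restricted strip obtained by deleting $\Gamma$ and $\{\theta=0\}$, both $z'$ and $\theta'$ keep a constant nonzero sign, so every coordinate function of an orbit is strictly monotone there; this is precisely the partition into monotonicity regions of Fig. \ref{figfases}, left. The only point demanding minor care is confirming that $\Gamma\cup\{\theta=0\}$ is the \emph{entire} zero set of $(z',\theta')$ in the strip, i.e.\ that $\sin\theta+z\cos\theta$ vanishes nowhere off $\Gamma$, which is immediate from its explicit form. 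I expect no genuine obstacle here: the whole proposition is an involution verification plus bookkeeping of signs, which is why the authors omit it.
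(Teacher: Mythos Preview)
Your verification is correct and complete. The paper explicitly omits the proof of this proposition as ``straightforward,'' so there is nothing to compare against; your argument---an involution check for (1), solving $\theta'=0$ for (2), and a sign analysis of $z'=\sin\theta$ and $\theta'=-\tfrac{2}{z^2}(\sin\theta+z\cos\theta)$ for (3)---is exactly the kind of direct computation the authors had in mind.
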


\begin{figure}[h]
\begin{center}
\includegraphics[width=.35\textwidth]{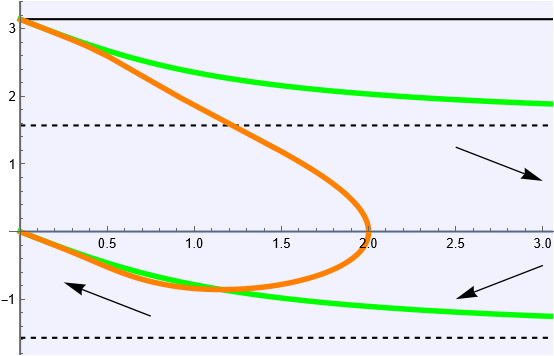}
\includegraphics[width=.45\textwidth]{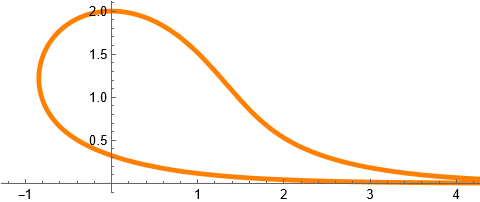}
\end{center}
\caption{Left: the phase plane and the orbit passing through $(2,0)$. Right: the corresponding   curve $\alpha$ of maximum height $2$.}
\label{figfases}
\end{figure}

The first result depicts the global behavior of the orbits and of the corresponding generating curves.

\begin{prop}\label{proporbitbehavior}
If $z_0>0$ there exists a unique orbit $\gamma_{z_0}$ passing through $(z_0,0)$, which converges to $z=0$ as $|s|$ increases. The corresponding generating curve $\alpha_{z_0}$ converges to $z=0$ and has Euclidean height $z_0$.
\end{prop}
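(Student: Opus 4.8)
The plan is to analyze the orbit through $(z_0,0)$ in the phase plane $\Theta$ and track both coordinate functions $z(s)$ and $\theta(s)$ using the monotonicity regions established in the previous proposition. Existence and uniqueness of $\gamma_{z_0}$ through $(z_0,0)$ is immediate from the Cauchy problem for the ODE system \eqref{eqs}. At the point $(z_0,0)$ we have $\theta=0$, so $\cos\theta=1,\ \sin\theta=0$, and the third equation of \eqref{eqs} gives $\theta'(s_0)=-2/z_0<0$; also $z'=\sin\theta=0$ there, so this is a critical point of the height function where $z$ attains a local maximum. This justifies calling $z_0$ the maximum height. By the symmetry stated in part (1) of the previous proposition, it suffices to follow the orbit for $s\geq s_0$ (i.e. into the region $\theta<0$) and reflect.

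First I would establish that for $s>s_0$ the orbit enters the region $\{\theta<0\}$ and that here $\theta$ keeps decreasing while $z$ decreases. On the segment where $-\pi/2<\theta<0$ and $z>\Gamma(\theta)=-\tan\theta$ one checks the signs: $z'=\sin\theta<0$ so $z$ is strictly decreasing, and I would verify from the sign of $\sin\theta+z\cos\theta$ relative to the curve $\Gamma$ whether $\theta'$ stays negative. The key structural fact is that the orbit cannot escape upward (it cannot return to $z=z_0$ because $z$ is decreasing) and cannot cross another orbit by uniqueness; combined with the monotonicity regions this forces $z(s)$ to be eventually strictly decreasing and bounded below by $0$.

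The main obstacle will be showing that $z\to 0$ (rather than converging to some positive limit $z_\infty>0$) and that this happens in infinite parameter-length, i.e. as $|s|\to\infty$. I would argue by contradiction: if $z(s)\downarrow z_\infty>0$ as $s\to\infty$, then since $z$ is monotone and bounded, $z'=\sin\theta\to 0$, which forces $\theta\to 0$ or $\theta\to-\pi$ along a subsequence; feeding a limiting value back into the third equation of \eqref{eqs} produces $\theta'$ bounded away from $0$, contradicting $\theta\to$ const. Alternatively, inspecting the vector field shows there is no equilibrium of \eqref{eqs} with $z>0$ (the equations $\sin\theta=0$ and $\sin\theta+z\cos\theta=0$ have no common solution for $z>0$), so the orbit cannot limit onto an interior rest point, and the only accumulation is on the boundary $z=0$. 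One must also rule out the orbit reaching $z=0$ in finite $s$; here I would use that $s$ is Euclidean arc-length so $|z'|\leq 1$, giving a linear lower bound that prevents finite-time collapse and yields the claimed behavior as $|s|\to\infty$.

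Finally, translating back to the generating curve $\alpha_{z_0}(s)=(x(s),0,z(s))$: the height statement is exactly $\max z=z_0$, and the convergence of $\alpha_{z_0}$ to $z=0$ is the statement $z(s)\to 0$ as $|s|\to\infty$ just proven. The reflected branch supplied by part (1) of the previous proposition gives the symmetric behavior on $s\leq s_0$, completing the description of the full orbit and its generating curve.
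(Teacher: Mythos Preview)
Your proposal contains two genuine errors.

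\textbf{Misuse of the symmetry.} You claim that by part~(1) of the previous proposition it suffices to analyze $s\geq s_0$ and then ``reflect'' to obtain the $s<s_0$ branch. But the symmetry there sends an orbit $\gamma(s)=(z(s),\theta(s))$ to the \emph{different} orbit $\overline{\gamma}(s)=(z(-s),\theta(-s)-\pi)$. Applied to $\gamma_{z_0}$, which passes through $(z_0,0)$, this produces the orbit through $(z_0,-\pi)$, not the backward branch of $\gamma_{z_0}$ itself. Thus the two halves of $\gamma_{z_0}$ are \emph{not} related by this symmetry, and the $s<0$ behavior must be analyzed on its own. The paper does exactly this: for $s<0$ the orbit enters the region $\theta\in(0,\pi/2)$, and one must distinguish the sub-cases where it converges to $(0,\theta_*)$ with $\theta_*\in(0,\pi/2]$ directly, or first crosses the line $\theta=\pi/2$ and then converges with $\theta_*\in(\pi/2,\pi]$. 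These cases are genuinely different (the generating curve is a graph in the first and a bi-graph in the second), and neither is obtained from the $s>0$ analysis by reflection.

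\textbf{The finite-time argument is backwards.} You write that $|z'|\leq 1$ ``gives a linear lower bound that prevents finite-time collapse.'' But $|z'|\leq 1$ only yields $z(s)\geq z_0-|s-s_0|$, which certainly allows $z$ to reach $0$ at $s=s_0+z_0$; it bounds the \emph{speed} of collapse, not the possibility of collapse. Ruling out $s_*<\infty$ (equivalently $\theta_*\neq 0,\pi$) requires a genuinely different argument. In the paper this is postponed to the next proposition, where one writes the curve locally as a graph $u(x)$, integrates the non-parametric equation, and derives a contradiction from the divergence of $2/u(x)$ as $u\to 0$. Your no-equilibrium observation is correct and useful for excluding an interior limit $z_\infty>0$, but it does not touch the finite-time issue.
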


\begin{proof}
Given $z_0>0$, consider the solution $\alpha_{z_0}$ of \eqref{eqs} with initial conditions $x(0)=0$, $z(0)=z_0$ and $\theta(0)=0$, and let $\gamma_{z_0}$ denote the corresponding orbit. The initial conditions yield that at $s=0$ the height function of $\alpha_{z_0}$ attains a local maximum. Indeed, $\theta'(0)=-2/z_0^2<0$, $z'(0)=0$ and for $s>0$ close enough to $s=0$ the orbit $\gamma_{z_0}$ lies in the region $\theta<0$ and $z>-\tan\theta$, hence $\theta'(s)<0$. By continuity, $\gamma_{z_0}$ ends up intersecting $\Gamma$ at some instant $s_0>0$ where $\theta'(s_0)=0$, hence $\alpha_{z_0}$ has vanishing curvature at $s=s_0$. Then, $\theta'(s)>0$ for $s>s_0$ since $\gamma_{z_0}$ lies in the region $z<-\tan\theta$. Consequently, $\gamma_{z_0}$ ends up converging to the boundary component $z=0$, and the motion of the orbits in the phase plane forbids $\gamma_{z_0}$ to converge to $(0,-\pi/2)$. On the other hand, for $s<0$ close enough to $s=0$, $\gamma_{z_0}$ lies in the region $\theta\in(0,\pi/2)$ and when $s<0$ further decreases it can behave in two ways: either $\gamma_{z_0}(s)\rightarrow (0,\theta_*)$ with $\theta_*\in(0,\pi/2]$, or $\gamma_{z_0}$ intersects the line $\theta=\pi/2$ at some $s_1<0$ and then ends up converging to $(0,\theta_*)$ with $\theta_*\in(\pi/2,\pi]$.

In the first case, $\alpha_{z_0}$ is always a graph on the $x$-axis since $x'=\cos\theta$ never vanishes. In the second case, $\alpha_{z_0}$ fails to be a   graph precisely at $s=s_1$ since $x'(s_1)=0$, but it can be expressed as a vertical bi-graph whose both components are smoothly joined at $x(s_1)$. In both cases, the maximum height of $\alpha_{z_0}$ is $z_0$ at $z=0$.
\end{proof}

In this proposition, we know   that when $s$ diverges, any orbit of $\Theta$ must end up converging to some $(0,\theta_*)$. The following result restricts the possible values of $\theta_*$ and the behavior of the parameter $s$, proving that   $\alpha_{z_0}$ converges to $z=0$.

\begin{prop}\label{proporbitaborde}
Let $\gamma(s)$ be an orbit and assume that $\gamma(s)\rightarrow(0,\theta_*)$ as $s\rightarrow s_*$. Then $\theta_*=0$ and $s_*=\infty$, or $\theta_*=\pi$ and $s_*=-\infty$.
\end{prop}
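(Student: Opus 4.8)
The plan is to study the two component functions of an orbit as $z\to0$, to rule out every limiting angle except $\theta_*=0$ and $\theta_*=\pi$, and only afterwards to pin down $s_*$ in the two surviving cases. Whenever $\theta_*\neq0,\pi$ one has $\sin\theta_*\neq0$, so near the limit $z'=\sin\theta$ stays away from $0$ and the orbit is locally a graph $\theta=\theta(z)$. Dividing the third equation in \eqref{eqs} by the second gives
\[
\frac{d\theta}{dz}=-\frac{2}{z^2}-\frac{2\cos\theta}{z\sin\theta}.
\]
Since $\sin\theta$ is bounded away from $0$, the second term is only $O(1/z)$ while the first is $O(1/z^2)$, so $d\theta/dz\le -1/z^2$ for $z$ small. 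Integrating from a fixed height down to $z$ then forces $\theta(z)\to+\infty$, contradicting $\theta\to\theta_*$. This already eliminates every $\theta_*\in[-\pi/2,\pi]$ with $\sin\theta_*\neq0$, leaving only $\theta_*\in\{0,\pi\}$.

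For $\theta_*=0$ I would first exclude the approach from $\theta>0$: there $\cos\theta,\sin\theta>0$ make the second term nonpositive, so $d\theta/dz\le -2/z^2$ and the same integration gives $\theta\to+\infty$, which is impossible. Thus $\theta<0$ with $\theta\uparrow0$, and this forces $\theta'>0$, placing the orbit in the region $z<-\tan\theta$ below $\Gamma$. In this regime the two singular terms in $d\theta/dz$ are of the same order and cancel to leading order, which is precisely the statement that the orbit meets the origin tangentially to $\Gamma$, i.e. $\theta\sim -z$. I would make this rigorous by trapping the orbit between the straight barrier $\theta=-(1+\varepsilon)z$ below and the curve $\Gamma$ above, checking by an elementary slope comparison that it crosses neither as $z\to0$; this confines the ratio $\theta/z$ to $-1$ and yields $|\sin\theta|\sim z$. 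The elapsed parameter is then
\[
s_*-s_1=\int_0^{z_1}\frac{dz}{|\sin\theta(z)|},
\]
whose integrand is comparable to $1/z$, so the integral diverges and $s_*=+\infty$.

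The remaining case $\theta_*=\pi$ requires no new work: applying the symmetry $\overline{\gamma}(s)=(z(-s),\theta(-s)-\pi)$ of the phase plane to an orbit with $\theta_*=0$ and $s_*=+\infty$ produces an orbit converging to the boundary direction $\theta=-\pi\equiv\pi$ as $s\to-\infty$, which is exactly the assertion $\theta_*=\pi$, $s_*=-\infty$. The main obstacle is the delicate tangential approach at $\theta_*=0$: because the leading $-2/z^2$ singularity cancels against $-2\cos\theta/(z\sin\theta)$, the crude blow-up of the first step no longer applies, and the whole conclusion $s_*=+\infty$ hinges on the sharp rate $|\sin\theta|\sim z$. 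Securing the lower barrier $\theta\ge-(1+\varepsilon)z$—which bounds $|\theta|$ from above and prevents the orbit from plunging steeply below $\Gamma$—is the step that genuinely uses the monotonicity structure of the phase plane and is where the argument must be carried out with care.
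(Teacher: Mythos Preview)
Your outline is correct and follows a genuinely different route from the paper's proof. The paper never works in the $(z,\theta)$–phase plane for this step; instead it writes the generating curve locally as a graph $x\mapsto u(x)$, recognizes $u''/(1+u'^2)=(\arctan u')'$, integrates the nonlinear equation \eqref{eqgrafou} over a short $x$–interval, and applies the mean value theorem to the resulting integral. The single inequality obtained in this way simultaneously forces $\arctan u'(x)\to+\infty$ whenever the curve reaches $z=0$ at a finite point, which rules out both $\theta_*\notin\{0,\pi\}$ and the possibility $s_*<\infty$ in one stroke; the $\theta_*=\pi$ case is then redone by hand with a parallel graph computation rather than by symmetry. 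Your approach is more modular: the crude blow-up $d\theta/dz\sim-2/z^2$ cleanly eliminates every $\theta_*$ with $\sin\theta_*\neq0$, the barrier argument handles the tangential case $\theta_*=0$, and the symmetry of Proposition~3.1 gives $\theta_*=\pi$ for free. The paper's method is shorter because one integration covers both conclusions; your method is more transparent about \emph{why} the limit must be tangential and explains the rate $\theta\sim -z$, which the paper's argument does not reveal.

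One point to tighten: the slope comparison on $L_\varepsilon=\{\theta=-(1+\varepsilon)z\}$ shows only that, for small $z$, the orbit cannot cross $L_\varepsilon$ from above to below; it does not by itself place the orbit above $L_\varepsilon$. You still need to rule out the scenario in which the orbit remains below $L_\varepsilon$ all the way to $z=0$. This is easy—on the region $\theta\le-(1+\varepsilon)z$ one has $|\sin\theta|\ge(1+\varepsilon)z$, hence $d\theta/dz\le -\dfrac{2\varepsilon}{(1+\varepsilon)z^2}$, and integrating forces $\theta\to+\infty$, contradicting $\theta\to0$—but it is a separate ingredient from the slope comparison and should be stated explicitly.
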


\begin{proof}
Let $\gamma_{z_0}$ be the orbit passing through $(z_0,0)$ at $s=0$. Arguing by contradiction, assume that $\gamma_{z_0}$ converges to some $(0,\theta_*)$ with $\theta_*\neq0,\pi$. 

First, assume that $s$ increases from $s=0$, hence $\theta_*\in(-\pi/2,0)$. Thus $z'(s)=\sin\theta(s)\rightarrow\sin\theta_*<0$ as $s\rightarrow s_*\leq\infty$. In fact, $s_*<\infty$ since otherwise $\alpha_{z_0}$, which is arc-length parametrized, would eventually cross the line $z=0$, a contradiction. Therefore, $\alpha_{z_0}(s)\rightarrow(x_*,0)$ as $s\rightarrow s_*$, with $x_*=x(s_*)$ and $z(s_*)=0$. Fix some $\hat{x}\in(0,x_*)$ with $x_*-\hat{x}<1$ and  let $\hat{s}$ such that $\hat{x}=x(\hat{s})$. In $s\in(\hat{s},s_*)$ we write $\alpha_{z_0}(s)$    as a   graph $x\mapsto (x,0,u(x))$ with $x\in(\hat{x},x_*)$. Then, Eq. \eqref{eq1} becomes
\begin{equation}\label{eqgrafou}
\frac{u''}{1+u'^2}=-\frac{2}{u}-\frac{2u'}{u^2}.
\end{equation}
Integrating from $\hat{x}$ to $x$, we have
$$
\arctan u'(x)=-2\int_{\hat{x}}^x\frac{1}{u(t)}dt+\frac{2}{u(x)}+A(\hat{x}),\qquad A(\hat{x})=\arctan u'(\hat{x})-\frac{2}{u(\hat{x})}.
$$
By the mean value theorem, there is $c_x\in(\hat{x},x)$ such that
\begin{equation}\label{eqarc}
\arctan u'(x)=-\frac{2}{u(c_x)}(x-\hat{x})+\frac{2}{u(x)}+A(\hat{x}).
\end{equation}
 Now, let $x_n\nearrow x_*$, $x_n\in (\hat{x},x_*)$   and name $c_n=c_{x_n}$. Bearing in mind that $u(c_n)>u(x_n)$ because $u$ is strictly decreasing, we get from \eqref{eqarc}
$$
\arctan u'(x_n)=\frac{-2}{u(c_n)}(x_n-\hat{x})+\frac{2}{u(x_n)}+A(\hat{x})>\frac{-2(x_n-\hat{x})+2}{u(x_n)}+A(\hat{x}).
$$
Letting $x_n\rightarrow x_*$, we obtain
$$
\arctan u'(x_*)>\frac{-2(x_*-\hat{x})+2}{u(x_*)}+A(\hat{x}),
$$
which is a contradiction since the left-hand side is negative while the right-hand side diverges to $\infty$.   This contradiction ensures us that $\gamma_{z_0}(s)\rightarrow(0,0)$ as $s$ increases. In fact, $s\rightarrow\infty$ since otherwise $s\rightarrow s_*<\infty$ and then $\alpha_{z_0}(s)\rightarrow(x_*,0)$, arriving to the same contradiction.

To prove that $\gamma_{z_0}(s)\rightarrow(0,\pi)$ as $s\rightarrow-\infty$, we follow a similar argument. If $\theta_*\in(0,\pi/2]$ then $x'=\cos\theta\neq0$, and consequently there exists $x_*<0$ such that $\alpha_{z_0}$ is a  graph $x\mapsto(x,0,u(x))$ for $x\in(x_*,0)$, with $u''(x)<0$ and $u'(x)>0$. Note that if $\theta_*=\pi/2$, then $\alpha_{z_0}$ would intersect orthogonally $z=0$, failing to be a  graph at this intersection point. However, we just need to express $\alpha_{z_0}$ as a  graph in the open interval $x\in(x_*,0)$. An integration of \eqref{eqgrafou} from $x$ to $0$ and the mean value theorem for integrals yields
$$
\arctan u'(x)=-\frac{2x}{u(c_x)}+\frac{2}{u(x)}-\frac{2}{u(0)},\qquad c_x\in(x,0).
$$
The left-hand side is a bounded function. However,  if $x\rightarrow x_*$ the right-hand side diverges to $\infty$, a contradiction. If $\theta_*\in(\pi/2,\pi)$, now $\alpha_{z_0}$ is a  bi-graph whose lower component is again a strictly convex graph that converges to $z=0$ at a finite point. The contradiction is the same as the one exposed in the proof of $\theta_*\in(-\pi/2,0)$, concluding that $\gamma_{z_0}(s)\rightarrow(0,\pi)$ as $s\rightarrow-\infty$. This concludes the proof of Prop. \ref{proporbitbehavior}.
\end{proof}

Now we stand in position to prove the case $a\neq0$ in Th. \ref{t2}. From Props. \ref{proporbitbehavior} and \ref{proporbitaborde} we know the configuration of any orbit in the phase plane. Given $z_0>0$ there exists a unique curve $\alpha_{z_0}$  whose Euclidean height  is $z_0$ at $z=0$ and $\alpha_{z_0}$ can be expressed as a  bi-graph whose components converge to $z=0$ as $x\rightarrow\infty$. The corresponding parabolic rotational surface $\mathcal{G}(z_0)$ is a $\xi$-grim reaper fulfilling all the properties stated in Th. \ref{t2}.

\section{Non-existence of closed $\xi$-translators}\label{sec4}

In this section, we will use the properties of $\xi$-grim reapers to prove the non-existence of certain $\xi$-translators. For this,  we will employ the comparison principle of solutions of  elliptic equations for   Eq. \eqref{eq1}.   In contrast to the Euclidean space, it is not known if $\xi$-translators are minimal surfaces in a   weighted space in the sense of Ilmanen \cite{il} (a similar situation  occurs for $\chi$-translators, see   \cite{li}).   

\begin{lem}[Tangency principle]
Let $\Sigma_1$ and $\Sigma_2$ be two connected $\xi$-translators and assume that they are tangent at some $p\in \Sigma_1\cap \Sigma_2$, around. If $\Sigma_1$ lies at one side of $\Sigma_2$, then $\Sigma_1=\Sigma_2$ in the largest neighborhood of $p$ in $\Sigma_1\cap \Sigma_2$. 
\end{lem}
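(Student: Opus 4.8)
The plan is to reduce the Tangency Principle to the classical maximum principle for second-order quasilinear elliptic operators. The key observation is that Eq.~\eqref{eq1}, when a piece of a $\xi$-translator is written as a local graph over its common tangent plane at $p$, becomes a quasilinear elliptic PDE whose ellipticity comes from the mean curvature operator. So the first step is to set up coordinates adapted to $p$: choose an orthonormal frame so that the common tangent plane $T_p\Sigma_1=T_p\Sigma_2$ is horizontal, and write both surfaces locally as graphs $z=u_1$ and $z=u_2$ over a common domain $\Omega\subset T_p\Sigma_1$, with $u_1(0)=u_2(0)$ and $\nabla u_1(0)=\nabla u_2(0)$ (the tangency condition).

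Next I would express the $\xi$-translator equation $H=\langle N,\xi\rangle$ for a graph $z=u$ as an equation of the form $\mathcal{Q}[u]:=\sum_{i,j}a_{ij}(x,u,\nabla u)\,\partial_{ij}u + b(x,u,\nabla u)=0$, where the matrix $(a_{ij})$ is positive definite (this is where ellipticity lives: $H$ contributes the mean-curvature operator, which is the divergence of the normalized gradient, and the zeroth/first order terms collect the curvature of $\h^3$ and the term $\langle N,\xi\rangle$). The formula \eqref{mean} relating $H$ to the Euclidean mean curvature $H_e$ makes this explicit: since $H_e$ is already a quasilinear elliptic operator in $u$ and the remaining terms $z H_e$, $(N^e)_3$, and $\langle N,\xi\rangle$ involve $u$ and $\nabla u$ but not second derivatives (except through $H_e$), the principal part is exactly that of the Euclidean minimal surface operator up to the positive factor $z>0$. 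Hence $\mathcal{Q}$ is uniformly elliptic in a neighborhood of $p$.

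The crux is then the comparison step. Both $u_1$ and $u_2$ satisfy $\mathcal{Q}[u_i]=0$, and by hypothesis one surface lies on one side of the other, say $u_1\ge u_2$ on $\Omega$, with equality and equal gradients at the origin. Setting $w=u_1-u_2\ge 0$, a standard argument writes $0=\mathcal{Q}[u_1]-\mathcal{Q}[u_2]$ as a linear homogeneous elliptic equation $\sum_{i,j}\tilde a_{ij}\partial_{ij}w+\sum_i \tilde b_i\partial_i w+\tilde c\, w=0$ for $w$, where the coefficients are obtained by integrating the derivatives of the coefficient functions of $\mathcal{Q}$ along the segment joining $(u_2,\nabla u_2)$ to $(u_1,\nabla u_1)$ (the Hopf linearization trick). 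The coefficients are bounded and $(\tilde a_{ij})$ is elliptic because $(a_{ij})$ is. I would then invoke the strong maximum principle (together with the Hopf boundary-point lemma to handle the possibility that the tangency point is a boundary contact, though here $p$ is interior): since $w\ge 0$ attains its minimum value $0$ at the interior point $0$, the strong maximum principle forces $w\equiv 0$ on the connected component of the set where the two graphs are defined. This gives $\Sigma_1=\Sigma_2$ on a neighborhood of $p$, and a standard connectedness/continuation argument extends the coincidence to the largest connected neighborhood of $p$ in $\Sigma_1\cap\Sigma_2$.

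The main obstacle I anticipate is bookkeeping in the linearization rather than anything conceptual: one must verify that the zeroth-order coefficient $\tilde c$ does not have the wrong sign, since the strong maximum principle in its cleanest form requires $\tilde c\le 0$. The way around this is the standard one — the strong maximum principle applies to $w\ge 0$ attaining an interior minimum $0$ regardless of the sign of $\tilde c$, because at a zero of $w$ the zeroth-order term drops out; equivalently one rewrites the equation so that the relevant comparison only uses the sign-definite principal and gradient parts. A second, minor technical point is ensuring the graphs are defined over a common domain and that $z$ stays bounded away from $0$ near $p$ so that the factor $z$ in \eqref{mean} does not degenerate the ellipticity, but this is automatic since $p\in\r^3_+$ has $z(p)>0$.
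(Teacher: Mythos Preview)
Your proposal is correct and follows essentially the same route as the paper: express the surfaces locally as graphs, recognize Eq.~\eqref{eq1} as a quasilinear elliptic PDE whose principal part is the minimal-surface operator, and apply the comparison/strong maximum principle to the difference. The only cosmetic difference is that the paper writes the graph as $z=u(x,y)$ over the horizontal plane (rather than over the tangent plane), derives the explicit PDE, and invokes Th.~9.2 of Gilbarg--Trudinger directly after noting that the lower-order term $\mathbf{b}(\bar{x},u,Du)$ is non-increasing in $u$; your explicit Hopf linearization and handling of the sign of $\tilde c$ is the more hands-on variant of the same argument.
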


\begin{proof} Without loss of generality, suppose that a $\xi$-translator $\Sigma$ writes locally as $z=u(\bar{x})$, where $\bar{x}=(x,y)\in\Omega\subset\r^2$. Then Eq.  \eqref{eq1} becomes 
$$\mbox{div}\left(\frac{Du}{\sqrt{1+|Du|^2}}\right)=-\frac{1}{u^2\sqrt{1+|Du|^2}}(u+au_x+bu_y).$$
As usually, we write this equation as $\sum a_{ij}(\bar{x},u,Du)D_{ij}u+{\bf b}(\bar{x},u,Du)=0$. Here 
$${\bf b}(\bar{x},u,Du)=\frac{1+|Du|^2}{2u^2}(u+au_x+bu_y).$$
Since ${\bf b}(\bar{x},u,Du)$ is non-increasing on the variable $u$ for each fixed $(\bar{x},Du)$,  the comparison principle of quasilinear equations can be applied: see \cite[Th. 9.2]{gt}. The proof concludes by realizing that a change of the orientation keeps Eq. \eqref{eq1} invariant, hence preserves the property of being a $\xi$-translator.
\end{proof}

We apply the tangency principle to exhibit the non-existence of certain $\xi$-translators contained in Killing cylinders. Recall that  a \emph{Killing cylinder}  $\mathcal{C}_L$    around a geodesic $L$ is the set of points that lie at a fixed distance of  $L$. The convex side of $\mathcal{C}_L$ is the component of $\h^3-\mathcal{C}_L$ that contains $L$.

\begin{thm}\label{t4} If $\xi=a\partial_x+b\partial_y$, then there do not exist properly immersed $\xi$-translators in $\h^3$ without boundary contained in the convex side of a Killing cylinder $\mathcal{C}_L$. As a consequence, there are no closed (compact and without boundary) $\xi$-translators.
\end{thm}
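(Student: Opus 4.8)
\textbf{Proof proposal for Theorem \ref{t4}.}

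The plan is to use the $\xi$-grim reapers constructed in Theorem \ref{t2} as barriers, and to reach a contradiction via the tangency principle. Suppose, for contradiction, that $\Sigma$ is a properly immersed $\xi$-translator without boundary contained in the convex side of a Killing cylinder $\mathcal{C}_L$. The key geometric observation is that, up to an isometry of $\h^3$ fixing the direction $\xi$, I may assume $L$ is a vertical geodesic and $\mathcal{C}_L$ is a Killing cylinder whose convex side is bounded (in the Euclidean half-space picture) in the $z$-coordinate; in particular the $z$-coordinate of $\Sigma$ attains a finite positive supremum $z_{\max}$ and infimum, since $\Sigma$ is properly immersed inside a compact-in-height region and has no boundary. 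The family $\{\mathcal{G}(z_0)\}_{z_0>0}$ (when $a\neq 0$) or $\{\mathcal{G}^0(z_0)\}_{z_0>0}$ (when $a=0$, together with the vertical planes) furnishes a foliation-type collection of comparison surfaces whose maximum heights sweep out all of $(0,\infty)$.

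First I would set up the comparison. Choose the $\xi$-grim reaper $\mathcal{G}(z_0)$ (or $\mathcal{G}^0(z_0)$) with $z_0$ large, so large that the barrier sits entirely above $\Sigma$ and is disjoint from it; this is possible because $\Sigma$ is contained in a region of bounded height while the grim reapers have maximum height $z_0\to\infty$. Then I would decrease $z_0$ continuously and track the first contact value. Because $\Sigma$ lies in the convex side of $\mathcal{C}_L$ and is properly immersed without boundary, as $z_0$ decreases the barrier $\mathcal{G}(z_0)$ must eventually touch $\Sigma$ at some interior point $p$ before the barrier degenerates; at that first contact both surfaces are tangent at $p$ and $\Sigma$ lies locally on one side of the barrier. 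The tangency principle (the Lemma proved above) then forces $\Sigma$ and $\mathcal{G}(z_0)$ to coincide on a neighborhood of $p$, and by connectedness and real-analyticity of solutions to \eqref{eq1} one concludes $\Sigma\subset\mathcal{G}(z_0)$. This is impossible, since each grim reaper $\mathcal{G}(z_0)$ converges to $z=0$ as $x\to\infty$ and therefore is \emph{not} contained in the convex side of $\mathcal{C}_L$ — its height is unbounded below away from $0$, leaving the prescribed region. This contradiction proves the non-existence statement.

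For the consequence about closed $\xi$-translators, I would argue that any closed (compact, boundaryless) surface $\Sigma$ is automatically contained in the convex side of some Killing cylinder: take $L$ to be any vertical geodesic disjoint from the compact set $\Sigma$, and enlarge the radius of $\mathcal{C}_L$ until its convex side engulfs $\Sigma$. Since $\Sigma$ is compact this is always achievable, so the first part applies verbatim and rules out such $\Sigma$.

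The main obstacle I anticipate is \textbf{the first-contact/sweep-out argument}: one must guarantee that as $z_0$ decreases from a large value, the first interior contact between $\Sigma$ and the moving barrier genuinely occurs at a tangency with correct one-sidedness, rather than at a point where the barrier escapes the relevant region or contact happens ``at infinity.'' This is exactly where properness of $\Sigma$ and the precise asymptotics of the grim reapers (convergence to $z=0$ as $x\to\infty$, established in Prop. \ref{proporbitbehavior} and Prop. \ref{proporbitaborde}) are indispensable: properness prevents the contact point from escaping to infinity, while the asymptotic behavior of $\mathcal{G}(z_0)$ ensures the barriers actually foliate the necessary region and cannot be contained in the convex side themselves. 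Making the compactness of the contact locus precise, and verifying that the barrier and $\Sigma$ are correctly oriented so that ${\bf b}$ is monotone in the required sense at $p$, will be the delicate points of the rigorous proof.
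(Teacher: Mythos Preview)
Your overall strategy---use the $\xi$-grim reapers as barriers and apply the tangency principle---is exactly what the paper does. However, the sweep-out you propose has a genuine gap.

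You claim that for $z_0$ large, $\mathcal{G}(z_0)$ ``sits entirely above $\Sigma$.'' This is false: by Theorem~\ref{t2}, every $\mathcal{G}(z_0)$ (and every $\mathcal{G}^0(z_0)$) extends all the way down to $z=0$---its generating curve converges to the $x$-axis. No matter how large $z_0$ is, the grim reaper still sweeps through every horizontal slab $\{0<z<c\}$, so it is never disjoint from $\Sigma$ for height reasons alone. Relatedly, your assertion that the convex side of a Killing cylinder about a \emph{vertical} geodesic is bounded in the $z$-coordinate is wrong: that region is a Euclidean cone $\{x^2+y^2<c\,z^2\}$, unbounded in $z$. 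So the premise on which your sweep rests does not hold.

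The paper fixes this by sweeping with a different parameter. For $a\neq 0$ it keeps $z_0$ fixed (any $z_0>\inf_\Sigma z$ works) and instead slides $\mathcal{G}(z_0)$ by the parabolic translations $(x,y,z)\mapsto(x+t,y,z)$. These are isometries of $\mathbb{H}^3$ that preserve $\xi$, so every translate is still a $\xi$-translator; for $t$ large the translated grim reaper is disjoint from the cone (its profile lives in $\{x\geq x_{\min}+t\}$), and letting $t\searrow -\infty$ produces a genuine first interior contact by properness. For $a=0$ the paper sweeps by the \emph{width} parameter $\lambda$ of the minimal profiles $z_\lambda$, using that as $\lambda\to\infty$ these profiles are asymptotic to a fixed vertical plane. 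In both cases the key is that the one-parameter family starts disjoint from the cone and ends intersecting $\Sigma$---something your $z_0$-sweep does not achieve. If you replace your variation of $z_0$ by one of these motions, the rest of your argument (tangency principle, contradiction because no grim reaper fits inside the cone, compact case) goes through.
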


\begin{proof}
By contradiction, suppose that $\Sigma$ is a properly immersed $\xi$-translator without boundary, contained in the convex side of a Killing cylinder $\mathcal{C}_L$. The geodesic $L$ is a vertical line or a halfcircle intersecting orthogonally the plane $z=0$. We will assume the first case, say $L$ is the $z$-axis, and a similar argument can be done if $L$ is a halfcircle.   

Consider the case $a\neq0$ in \eqref{x2}. Fix some $z_0>z_*=\inf_{p\in \Sigma}z(p)$ and let $\mathcal{G}(z_0)$ the $\xi$-grim reaper given by Th. \ref{t2} whose maximum height is $z_0$. After   parabolic translations $t\mapsto(x+t,y,z)$ with $t$ sufficiently large, the grim reaper  $\mathcal{G}(z_0)$ does not intersect the convex side of $\mathcal{C}_L$. Let us move back $\mathcal{G}(z_0)$ by  parabolic translations $t\mapsto(x+t,y,z)$ with $t\searrow-\infty$. Then there exists a first contact interior point between $\Sigma$ and $\mathcal{G}(z_0)$ because $\Sigma$ is proper and has points with height less than $z_0$.   Consequently the tangency principle  would imply that $\mathcal{G}(z_0)$ and $\Sigma$ coincide because both surfaces are complete. This is a contradiction because no $\mathcal{G}(z_0)$ is contained in the convex side of any Killing cylinder.

Suppose now  $a=0$ in \eqref{x2}. Notice that there must be points $p\in \Sigma$ with $x(p)\not=0$ since on the contrary, $\Sigma$ would be with the vertical plane $x=0$, which it is not  contained in the convex side of $\mathcal{C}_L$. Let $p^*\in\Sigma$ be a point with $x(p^*)\neq0$, say $x(p^*)>0$, and let $x_1= x(p^*)/2$. Consider the  solutions $z_\lambda=z_\lambda(x)$ of \eqref{eq-p1} where $z(x_1)=z(x_1+\lambda)=0$, $\lambda>0$, in particular,   $z'(\frac{x_1+\lambda}{2})=0$. If $\lambda$ is sufficiently small, then the graphic of $z_\lambda$ is outside $\mathcal{C}_L$, or equivalently, $\mathcal{G}^0(z(x_\lambda))\cap \mathcal{C}_L=\emptyset$. Letting  $\lambda\nearrow\infty$  the graph of $z_\lambda$ is asymptotic to the vertical plane of equation $x=x_1$ (see Rem. \ref{remark2};  also   \cite{dl}). Since $x(p^*)>x_1$, there  there is a first time $\lambda_0$ such that  $\mathcal{G}^0(z(x_{\lambda_0}))$ touches $\Sigma$.    The tangency principle implies that both surfaces coincide,  a contradiction.

\end{proof}
 
\section{Appendix: grim reapers in $\h^3$}\label{sec5}

In hyperbolic space there are different possible vector fields $X$ to consider in   \eqref{eq1} as well as notions of grim reapers. Due to this variety of situations, in this section we summarize the recent developments achieved in the literature about grim reapers of $\h^3$. If $X$ is a Killing vector field, there are the choices $\xi$ and $\chi$ studied in this paper, whose flows of isometries are, respectively, the parabolic and hyperbolic translations of $\h^3$. 
Another vector fields of interest  are the conformal Killing vector fields, as for example the vector fields $\partial_z$, \cite{bl}, and $-\partial_z$, \cite{mr}. These vector fields are of special interest because  $\partial_z$-translators and $-\partial_z$-translators are the analogues of the Euclidean self-shrinkers and self-expanders, respectively. In contrast to the vector fields $\xi$ and $\chi$,  $\partial_z$-translators and $-\partial_z$-translators are minimal surfaces in a density space in the sense of Ilmanen \cite{il}. Indeed, if   $\h^3$ is endowed with the conformal metric $e^{- 2/z}\langle,\rangle$ (resp. $e^{ 2/z}\langle,\rangle$), then minimal surfaces in this conformal space are just $\partial_z$-translators (resp. $-\partial_z$-translators) in $\h^3$. 

To precise the notion of grim reaper, and following with the Euclidean ambient space, an $X$-grim reaper is an $X$-translator which is invariant by parabolic or hyperbolic translations. To simplify the notation, we write $(P,X)$-grim reaper or $(H,X)$-grim reaper, respectively.  We summarize all types of grim reapers in $\h^3$.

\begin{thm} The $X$-grim reapers in $\h^3$ are the following.
\begin{enumerate} 
\item $(H,\xi)$-grim reapers and $(P,\xi)$-grim reapers have been classified in this paper: see Thms. \ref{t1} and \ref{t2} respectively. 
\item $(H,\chi)$-grim reapers and $(P,\chi)$-grim reapers are classified in \cite{li}.  
\item $(H,\partial_z)$-grim reapers and $(P,\partial_z)$-grim reapers are classified in  \cite{bl}.  
\item   $(P,-\partial_z)$  and $(H,-\partial_z)$-grim reapers are classified in  \cite{bl} and   \cite{mr}, respectively.
\end{enumerate}
\end{thm}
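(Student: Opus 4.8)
The plan is to read the statement as a catalogue rather than as a single assertion: for each distinguished vector field $X$ the notion of $X$-grim reaper is fixed once we declare whether the invariance group is the parabolic or the hyperbolic one-parameter group of translations. The four items therefore encode eight cases, and proving the theorem reduces to matching each case to the place where its classification is carried out. I would organize the argument by the four vector fields in turn.

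First I would settle $X=\xi$, the Killing field whose flow are the parabolic translations. The $(H,\xi)$-grim reapers are exactly the hyperbolic rotational $\xi$-translators of Theorem \ref{t1}, and the $(P,\xi)$-grim reapers are the parabolic rotational $\xi$-translators of Theorem \ref{t2}; both are established in Sections \ref{sec2} and \ref{sec3}, so this item requires no new work. Next, for $X=\chi$, the Killing field generating the hyperbolic translations, I would invoke \cite{li}, where both the $(H,\chi)$- and the $(P,\chi)$-grim reapers are classified.

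The last two items treat the conformal Killing fields $\partial_z$ and $-\partial_z$, whose translators play the role of the Euclidean self-shrinkers and self-expanders. For $X=\partial_z$ both the hyperbolic and the parabolic grim reapers are classified in \cite{bl}; for $X=-\partial_z$ the parabolic case is treated in \cite{bl} and the hyperbolic case in \cite{mr}. Assembling these references covers all eight combinations.

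The step demanding the most care is not a computation but the claim of exhaustiveness. By the definition adopted here an $X$-grim reaper is an $X$-translator invariant by a parabolic or hyperbolic one-parameter group, so spherical invariance is excluded at the outset and only the two labels $P$ and $H$ occur; together with the four fields $\xi,\chi,\pm\partial_z$ this closes the list by construction. The remaining subtlety is purely a matter of bookkeeping: one must check that the orientation conventions and the sign of $H$ used in \cite{li,bl,mr} match those of \eqref{eq1}, so that the surfaces described in those references are genuinely the same objects we are cataloguing.
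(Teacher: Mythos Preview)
Your reading is correct: this theorem is a catalogue, not an assertion requiring argument, and the paper accordingly supplies no proof at all---it simply states the theorem as a summary of the literature in the Appendix. Your proposal does exactly what is appropriate here, namely unpack the eight cases and match them to Theorems~\ref{t1}, \ref{t2} and the cited references, so there is nothing to add.
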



\section*{Acknowledgment}

Antonio Bueno has been partially supported by CARM, Programa Regional de Fomento de la Investigaci\'on, Fundaci\'on S\'eneca-Agencia de Ciencia y Tecnolog\'{\i}a Regi\'on de Murcia, reference 21937/PI/22. 

Rafael L\'opez  is a member of the IMAG and of the Research Group ``Problemas variacionales en geometr\'{\i}a'',  Junta de Andaluc\'{\i}a (FQM 325). This research has been partially supported by PID2020-117868GB-I00,  and by the ``Mar\'{\i}a de Maeztu'' Excellence Unit IMAG, reference CEX2020-001105- M, funded by MCINN/AEI/10.13039/501100011033/CEX2020-001105-M.


\begin{thebibliography}{99}

\bibitem{an}  Andrews, B.,    Chen, X.:  Curvature flow in hyperbolic spaces. J. Reine Angew. Math. 729, 29--49  (2017)



\bibitem{bl}  Bueno, A.,   L\'opez, R.: Horo-shrinkers in hyperbolic space, preprint (2023) 

\bibitem{cm}   Cabezas-Rivas, E.,  Miquel, V.: Volume preserving mean curvature flow in the hyperbolic space. Indiana Univ. Math. J. 56,  2061--2086 (2007)
 
 \bibitem{dl}  Dierkes, U.,  L\'opez, R.: Cylindrical singular minimal surfaces. Atti Accad. Naz. Lincei Cl. Sci. Fis. Mat. Natur. (2023).
 
 \bibitem{dd}   Do Carmo, M.  Dajczer, M.: Rotation hypersurfaces in spaces of constant curvature. Trans. Amer. Math. Soc. 277,  685--709  (1983)
 


\bibitem{gt}  Gilbarg, D.,  Trudinger, N. S.: Elliptic Partial Differential Equations of Second Order. Springer-Verlag, Berlin, Heidelberg (1977)

\bibitem{go}   Gomes, J. M.: Sobre hypersurficies com curvature m\'edia constante no espa\c o hiperb\'olico. Informes de matem\'atica. Instituto de Matem\'atica Pura e Aplicada. S\'erie F-011-Jun/85. Rio de Janeiro (1985)

\bibitem{hs}  Huisken, G.,  Sinestrari, C.: Convexity estimates for mean curvature flow and singularities of mean convex surfaces. Acta Math. 183, 45--70 (1999)

\bibitem{il}   Ilmanen, T.:Elliptic regularization and partial regularity for motion by mean curvature. Mem. Amer. Math. Soc. 108 (1994)

\bibitem{li}  de Lima, R. F., Ramos,  A. K.,  dos Santos, J. P.: Solitons to mean curvature flow in the hyperbolic 3-space, arXiv:2307.14136 [math.DG] (2023)

\bibitem{lo}   L\'opez,  R.: Invariant singular minimal surfaces. Ann. Glob. Anal. Geom. 53, 521--541   (2018)
 

\bibitem{mr}    Mari, L.,  Rocha de Oliveira, J.,   Savas-Halilaj, A.,  Sodr\'e de Sena, R.: Conformal solitons for the mean curvature flow in hyperbolic space, arXiv:2307.05088 [math.DG] (2023)

 


 
\end{thebibliography}
\end{document}